\documentclass[10pt,a4paper]{article}
\usepackage{a4wide}
\usepackage{amsmath,amssymb,amsthm}
\usepackage{tabls}
\usepackage[dvipdfmx]{graphicx}
\usepackage{wrapfig}
\usepackage{array}
\usepackage[algoruled,linesnumbered,algo2e,vlined]{algorithm2e}
\usepackage{url}
\usepackage{multirow}
\usepackage{multicol}
\usepackage{cases}
\usepackage[normalem]{ulem}
\usepackage{arydshln}
\newtheorem{theorem}{Theorem}

\newtheorem{lemma}[theorem]{Lemma}
\newtheorem{corollary}[theorem]{Corollary}

\newcommand{\lynnum}{\mathit{L}}
\newcommand{\occ}{\mathit{Occ}}
\newcommand{\subcontain}{\mathit{C}}
\newcommand{\maxnum}{\mathit{MTS}}
\newcommand{\totalnum}{\mathit{TS}}
\newcommand{\distnum}{\mathit{MDS}}
\newcommand{\exnum}{\mathit{ETS}}
\newcommand{\totaldistinct}{\mathit{TDS}}
\newcommand{\exdistinct}{\mathit{EDS}}
\newcommand{\DF}{\mathit{MDF}}
\newcommand{\TF}{\mathit{MTF}}
\newcommand{\ETF}{\mathit{ETF}}
\newcommand{\EDF}{\mathit{EDF}}
\pagestyle{plain}

\title{
  Counting Lyndon Subsequences
}
\author{
  Ryo~Hirakawa$^{1}$ \quad
  Yuto~Nakashima$^{2}$ \quad 
  Shunsuke~Inenaga$^{2,3}$ \quad 
  Masayuki~Takeda$^{2}$ \\ 
  \\
  {$^1$ Department of Information Science and Technology,} \\
  {Kyushu University, Fukuoka, Japan}\\
    {\texttt{hirakawa.ryo.460@s.kyushu-u.ac.jp}}\\
  {$^2$ Department of Informatics, Kyushu University, Fukuoka, Japan}\\
    {\texttt{\{yuto.nakashima,inenaga,takeda\}@inf.kyushu-u.ac.jp}}\\
  {$^3$ PRESTO, Japan Science and Technology Agency, Kawaguchi, Japan}\\
}

\begin{document}
\maketitle

\begin{abstract}
  Counting substrings/subsequences that preserve some property (e.g., palindromes, squares) 
  is an important mathematical interest in stringology.
  Recently, Glen et al. studied the number of Lyndon factors in a string.
  A string $w = uv$ is called a Lyndon word
  if it is the lexicographically smallest among all of its conjugates $vu$.
  In this paper, we consider a more general problem "counting Lyndon subsequences".
  We show 
  (1)~the maximum total number of Lyndon subsequences in a string, 
  (2)~the expected total number of Lyndon subsequences in a string, 
  (3)~the expected number of distinct Lyndon subsequences in a string. 
\end{abstract}

\section{Introduction}

A string $x = uv$ is said to be a \emph{conjugate} of
another string $y$ if $y = vu$.
A string $w$ is called a \emph{Lyndon word}
if it is the lexicographically smallest among all of its conjugates.
It is also known that
$w$ is a Lyndon word iff $w$ is the lexicographically smallest suffix of itself
(excluding the empty suffix).

A \emph{factor} of a string $w$ is a sequence of
characters that appear contiguously in $w$.
A factor $f$ of a string $w$ is called a \emph{Lyndon factor}
if $f$ is a Lyndon word.
Lyndon factors enjoy a rich class of algorithmic and
stringology applications including:
counting and finding the maximal repetitions (a.k.a. runs) in a string~\cite{BannaiIINTT17} and in a trie~\cite{SugaharaNIBT19},
constant-space pattern matching~\cite{CrochemoreP91},
comparison of the sizes of
run-length Burrows-Wheeler Transform of a string and its reverse~\cite{GiulianiILPST21},
substring minimal suffix queries~\cite{BabenkoGKKS16},
the shortest common superstring problem~\cite{Mucha13},
and grammar-compressed self-index (Lyndon-SLP)~\cite{TsurutaKNIBT20}.

Since Lyndon factors are important combinatorial objects,
it is natural to wonder how many Lyndon factors can exist in a string.
Regarding this question,
the next four types of counting problems are interesting:
\begin{itemize}
\item $\TF(\sigma, n)$: the \emph{maximum total} number of Lyndon factors in a string of length $n$ over an alphabet of size $\sigma$.
\item $\DF(\sigma, n)$: the \emph{maximum} number of \emph{distinct} Lyndon factors in a string of length $n$ over an alphabet of size $\sigma$.
\item $\ETF(\sigma, n)$: the \emph{expected total} number of Lyndon factors in a string of length $n$ over an alphabet of size $\sigma$.
\item $\EDF(\sigma, n)$: the \emph{expected} number of \emph{distinct} Lyndon factors in a string of length $n$ over an alphabet of size $\sigma$.
\end{itemize}

Glen et al.~\cite{Glen2017counting} were the first who tackled
these problems, and they gave exact values
for $\DF(\sigma, n)$, $\ETF(\sigma, n)$, and $\EDF(\sigma, n)$.
Using the number $\lynnum(\sigma, n)$ of Lyndon words of length $n$ over an alphabet of size $\sigma$, their results can be written as shown in Table~\ref{tbl:Lyndon_factors}.
\begin{table}[!ht]
\begin{center}
\caption{The numbers of Lyndon factors in a string of length $n$ over an alphabet of size $\sigma$, where $n = m\sigma + p$ with $0 \leq p < \sigma$ for $\TF(\sigma, n)$ and $\DF(\sigma, n)$.}
\begin{tabular}{|l|l|l|}
\hline
\multicolumn{2}{|c|}{Number of Lyndon Factors in a String} \\ \hline \hline
Maximum Total $\TF(\sigma, n)$ & $\displaystyle \binom{n+1}{2} - (\sigma-p)\binom{m+1}{2} -p \binom{m+2}{2} +n$~[this work] \\ \hline
Maximum Distinct $\DF(\sigma, n)$ &  $\displaystyle \binom{n+1}{2} - (\sigma-p)\binom{m+1}{2} -p \binom{m+2}{2} +\sigma$~\cite{Glen2017counting} \\ \hline
Expected Total $\ETF(\sigma, n)$ & $\displaystyle \sum_{m=1}^{n} {L(\sigma,m) (n-m+1) \sigma ^{-m}}$~\cite{Glen2017counting} \\ \hline 
Expected Distinct $\EDF(\sigma, n)$ & $\displaystyle\sum_{m=1}^{n}  {L(\sigma,m) \sum_{s=1}^{\lfloor { n/m } \rfloor } { (-1)^{s+1} \binom{n-sm+s}{s} \sigma^{-sm}}}$~\cite{Glen2017counting} \\ \hline
\end{tabular}
\end{center}
\label{tbl:Lyndon_factors}
\end{table}

The first contribution of this paper is filling the missing piece
of Table~\ref{tbl:Lyndon_factors}, the exact value of $\TF(\sigma, n)$,
thus closing this line of research for Lyndon factors (substrings).

We then extend the problems to subsequences.
A subsequence of a string $w$ is a sequence of characters
that can be obtained by removing 0 or more characters from $w$.
A subsequence $s$ of a string $w$ is said to be a \emph{Lyndon subsequence}
if $s$ is a Lyndon word.
As a counterpart of the case of Lyndon factors,
it is interesting to consider the next four types of counting problems of Lyndon subsequences:
\begin{itemize}
\item $\maxnum(\sigma, n)$: the \emph{maximum total} number of Lyndon subsequences in a string of length $n$ over an alphabet of size $\sigma$.
\item $\distnum(\sigma, n)$: the \emph{maximum} number of \emph{distinct} Lyndon subsequences in a string of length $n$ over an alphabet of size $\sigma$.
\item $\exnum(\sigma, n)$: the \emph{expected total} number of Lyndon subsequences in a string of length $n$ over an alphabet of size $\sigma$.
\item $\exdistinct(\sigma, n)$: the \emph{expected} number of \emph{distinct} Lyndon subsequences in a string of length $n$ over an alphabet of size $\sigma$.
\end{itemize}

Among these, we present the exact values for
$\maxnum(\sigma, n)$, $\exnum(\sigma, n)$, and $\exdistinct(\sigma, n)$.
Our results are summarized in Table~\ref{tbl:Lyndon_subsequences}.
Although the main ideas of our proofs are analogous to the results for substrings,
there exist differences based on properties of substrings and subsequences.

\begin{table}[!ht]
\begin{center}
\caption{The numbers of Lyndon subsequences in a string of length $n$ over an alphabet of size $\sigma$, where $n = m\sigma + p$ with $0 \leq p < \sigma$ for $\maxnum(\sigma, n)$.}
\begin{tabular}{|l|l|l|}
\hline
\multicolumn{2}{|c|}{Number of Lyndon Subsequences in a String} \\ \hline \hline
Maximum Total $\maxnum(\sigma, n)$ & $\displaystyle 2^n-(p+\sigma)2^m+n+\sigma-1$~[this work] \\ \hline
Maximum Distinct $\distnum(\sigma, n)$ &  open \\ \hline
Expected Total $\exnum(\sigma, n)$ & $\displaystyle \sum_{m=1}^{n} \left[ \lynnum(\sigma,m) \binom{n}{m} \sigma ^{n-m} \right] \sigma^{-n} $~[this work] \\ \hline 
Expected Distinct $\exdistinct(\sigma, n)$ & $\displaystyle \sum_{m=1}^{n} \left[ \lynnum(\sigma,m) \sum_{k=m}^{n} { \binom{n}{k} (\sigma-1)^{n-k} } \right] \sigma^{-n}$~[this work] \\ \hline
\end{tabular}
\label{tbl:Lyndon_subsequences}
\end{center}
\end{table}

In the future work, we hope to determine the exact value for $\distnum(\sigma,n)$.
\section{Preliminaries} \label{sec:preliminaries}

\subsection{Strings}
Let $\Sigma = \{a_1, \ldots, a_{\sigma}\}$ be an ordered {\em alphabet}
of size $\sigma$ such that $a_1 < \ldots < a_{\sigma}$.
An element of $\Sigma^*$ is called a {\em string}.
The length of a string $w$ is denoted by $|w|$.
The empty string $\varepsilon$ is a string of length 0.
Let $\Sigma^+$ be the set of non-empty strings,
i.e., $\Sigma^+ = \Sigma^* - \{\varepsilon \}$.
The $i$-th character of a string $w$ is denoted by $w[i]$, where $1 \leq i \leq |w|$.
For a string $w$ and two integers $1 \leq i \leq j \leq |w|$,
let $w[i..j]$ denote the substring of $w$ that begins at position $i$ and ends at
position $j$. For convenience, let $w[i..j] = \varepsilon$ when $i > j$.
A string $x$ is said to be a subsequence of a string $w$ 
if there exists a set of positions $\{i_1, \ldots, i_{|x|}\}~(1 \leq i_1 < \ldots < i_{|x|} \leq |w|)$ 
such that $x = w[i_1] \cdots w[i_{|x|}]$.
We say that a subsequence $x$ occurs at $\{i_1, \ldots, i_{|x|}\}~(1 \leq i_1 < \ldots < i_{|x|} \leq |w|)$
if $x = w[i_1] \cdots w[i_{|x|}]$.

\subsection{Lyndon words}
A string $x = uv$ is said to be a \emph{conjugate} of
another string $y$ if $y = vu$.
A string $w$ is called a \emph{Lyndon word}
if it is the lexicographically smallest among all of its conjugates.
Equivalently, a string $w$ is said to be a Lyndon word,
if $w$ is lexicographically smaller than all of its non-empty proper suffixes.

Let $\mu$ be the \emph{M\"{o}bius function} on the set of positive integers defined as follows.
\[
	\mu(n) = \begin{cases}
	1 & (n=1) \\
	0 & (\mbox{if } n \mbox{ is divisible by a square}) \\
	(-1)^k & (\mbox{if } n \mbox{ is the product of } k \mbox{ distinct primes})
	\end{cases}
\]

It is known that the number $\lynnum(\sigma,n)$ of Lyndon words of length $n$ over an alphabet of size $\sigma$ 
can be represented as 
\begin{equation*}
  \lynnum(\sigma,n) = \frac{1}{n} \sum_{d \mid n}{ \mu\left(\frac{n}{d}\right)\sigma^d },
\end{equation*}
where $d|n$ is the set of divisors $d$ of $n$~\cite{Lothaire83}.
\section{Maximum total number of Lyndon subsequences}

Let $\maxnum(\sigma,n)$ be the maximum total number of Lyndon subsequences 
in a string of length $n$ over an alphabet $\Sigma$ of size $\sigma$.
In this section, we determine $\maxnum(\sigma,n)$.

\begin{theorem}
  For any $\sigma$ and $n$ such that $\sigma < n$,
  \[
    \maxnum(\sigma,n) = 2^n - (p + \sigma) 2^m + n + \sigma - 1
  \]
  where $n = m \sigma + p~(0 \leq p < \sigma)$.
  Moreover, 
  the number of strings that contain $\maxnum(\sigma,n)$ Lyndon subsequences is $\binom{\sigma}{p}$,
  and the following string $w$ is one of such strings;
  \[
    w = {a_1}^m \cdots {a_{\sigma-p}}^m {a_{\sigma-p+1}}^{m+1} \cdots {a_\sigma}^{m+1}.
  \]
\end{theorem}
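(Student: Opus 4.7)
The plan is to count by a complementary decomposition and then reduce the problem to a convex optimization on character multiplicities. Fix any length-$n$ string $w$ over $\Sigma$, and let $c_i$ denote the number of occurrences of $a_i$ in $w$, so $\sum_{i=1}^{\sigma} c_i = n$. First, split the $2^n - 1$ non-empty subsets of positions of $w$ according to the subsequence they produce: (i) the $n$ length-$1$ subsequences, which are all Lyndon; (ii) the constant subsequences of length $\geq 2$, which are never Lyndon and total $\sum_i (2^{c_i} - c_i - 1)$, depending only on the multiset $\{c_i\}$; and (iii) the non-constant subsequences, numbering $2^n + \sigma - 1 - \sum_i 2^{c_i}$. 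Since at most all of (iii) are Lyndon, the number of Lyndon subsequences of $w$ satisfies the general upper bound
\[
2^n + n + \sigma - 1 - \sum_{i=1}^{\sigma} 2^{c_i},
\]
with equality iff every non-constant subsequence of $w$ is Lyndon.

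The key structural step will be to identify this equality case as exactly the non-decreasing (sorted) strings. The nontrivial direction is a short suffix-comparison lemma: every non-constant non-decreasing string $s$ is Lyndon, since for any non-empty proper suffix $s[i..]$, either $s[i] > s[1]$ (giving $s < s[i..]$ at the first position) or $s[i] = s[1]$, in which case the maximal run of $s[1]$'s starting $s$ is strictly longer than the one starting $s[i..]$ (because $s$ is non-decreasing but not constant), forcing $s < s[i..]$ at the end of that common run. The converse is immediate: if $w[i] > w[j]$ for some $i < j$, then the length-$2$ subsequence $w[i]w[j]$ is non-constant and not Lyndon, so equality fails.

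Once the maximum is reduced to sorted strings, the problem becomes minimizing $\sum_{i=1}^{\sigma} 2^{c_i}$ subject to $c_i \geq 0$ and $\sum_i c_i = n$. The convexity of $x \mapsto 2^x$, made explicit by the exchange ``if $c_i \geq c_j + 2$, then moving one unit from $c_i$ to $c_j$ strictly decreases the sum'', forces the unique minimum at $c_i \in \{m, m+1\}$ with exactly $p$ entries equal to $m+1$ (the assumption $\sigma < n$ ensures $m \geq 1$, so no $c_i$ vanishes). The minimum value is $(\sigma - p)\,2^m + p \cdot 2^{m+1} = (\sigma + p)\,2^m$, and substituting back into the bound yields the claimed formula for $\maxnum(\sigma, n)$. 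The extremal sorted strings are parametrized by the choice of which $p$ of the $\sigma$ characters receive count $m+1$, giving $\binom{\sigma}{p}$ maximizers in total; the displayed $w$ is the specific choice that assigns the larger count to the $p$ largest characters.

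The main obstacle will be the structural lemma of the second paragraph: it is what turns the Lyndon-subsequence count of a sorted string into the clean closed form and simultaneously pins down ``sorted'' as the equality case of the general upper bound. Given that lemma, the upper-bound bookkeeping and the convexity-based balancing step are routine.
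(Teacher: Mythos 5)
Your proposal is correct and takes essentially the same route as the paper: restrict attention to non-decreasing strings, observe that the non-Lyndon subsequences are exactly the constant ones of length at least $2$ so the count becomes $2^n + n + \sigma - 1 - \sum_{i=1}^{\sigma} 2^{c_i}$, and minimize $\sum_i 2^{c_i}$ by the convexity exchange $2^a + 2^b > 2^{a-1} + 2^{b+1}$ for $a \geq b+2$. The only difference is that you explicitly prove the two facts the paper merely asserts (every non-constant non-decreasing string is Lyndon, and any unsorted string is strictly suboptimal), which is a tightening of the same argument rather than a different approach.
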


\begin{proof}
  Consider a string $w$ of the form 
  \[
    w = {a_1}^{k_1}{a_2}^{k_2}\cdots{a_\sigma}^{k_\sigma}
  \]
  where $\sum_{i=1}^\sigma k_i = n$ and $k_i \geq 0$ for any $i$.
  For any subsequence $x$ of $w$,
  $x$ is a Lyndon word if $x$ is not a unary string of length at least 2.
  It is easy to see that this form is a necessary condition for the maximum number
  ($\because$ there exist several non-Lyndon subsequences if $w[i] > w[j]$ for some $i < j$).
  Hence, the number of Lyndon subsequences of $w$ can be represented as
  \begin{eqnarray*}
    (2^n-1) - \sum_{i=1}^{\sigma} {(2^{k_i}-1-k_i)} &=& 2^n - 1 - \sum_{i=1}^{\sigma}{2^{k_i}} + \sum_{i=1}^{\sigma}{k_i} + \sigma \\
                                              &=& 2^n-1-\sum_{i=1}^{\sigma} 2^{k_i} + n + \sigma.
  \end{eqnarray*}
  This formula is maximized when $\sum_{i=1}^{\sigma}{2^{k_i}}$ is minimized.
  It is known that 
  \[
    2^a + 2^b > 2^{a-1} + 2^{b+1}
  \]
  holds for any integer $a, b$ such that $a \geq b+2$.
  From this fact, $\sum_{i=1}^{\sigma}{2^{k_i}}$ is minimized
  when the difference of $k_i$ and $k_j$ is less than or equal to 1 for any $i, j$.
  Thus, if we choose $p~k_i$'s as $m+1$, and set $m$ for other $(\sigma-p)~k_i$'s 
  where $n = m \sigma + p~(0 \leq p < \sigma)$,
  then $\sum_{i=1}^{\sigma}{2^{k_i}}$ is minimized.
  Hence, 
  \begin{eqnarray*}
    \min(2^n-1-\sum_{i=1}^{\sigma} 2^{k_i} + n + \sigma) &=& 2^n-1 - p \cdot 2^{m+1} - (\sigma-p)2^m + n + \sigma \\
    &=& 2^n - (p+\sigma)2^m + n + \sigma - 1
  \end{eqnarray*}
  Moreover, one of such strings is 
  \[
    {a_1}^m \cdots {a_{\sigma-p}}^m {a_{\sigma-p+1}}^{m+1} \cdots {a_\sigma}^{m+1}.
  \]
  Therefore, this theorem holds.
\end{proof}

We can apply the above strategy to the version of substrings.
Namely, we can also obtain the following result.

\begin{corollary}
  Let $\TF(\sigma,n)$ be the maximum total number of Lyndon substrings in a string of length $n$
  over an alphabet of size $\sigma$.
  For any $\sigma$ and $n$ such that $\sigma < n$,
  \[
    \TF(\sigma,n) = \binom{n}{2} - (\sigma-p)\binom{m+1}{2} - p \binom{m+2}{2} + n
  \]
  where $n = m \sigma + p~(0 \leq p < \sigma)$.
  Moreover, 
  the number of strings that contain $\TF(\sigma,n)$ Lyndon subsequences is $\binom{\sigma}{p}$,
  and the following string $w$ is one of such strings;
  \[
    w = {a_1}^m \cdots {a_{\sigma-p}}^m {a_{\sigma-p+1}}^{m+1} \cdots {a_\sigma}^{m+1}.
  \]
\end{corollary}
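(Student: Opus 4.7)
The plan is to mirror the proof of the theorem, replacing the subsequence count $2^n-1$ by the substring-position count $\binom{n+1}{2}$ and the ``bad'' term $2^{k_i}-1-k_i$ by the substring analogue $\binom{k_i}{2}$. Concretely, I would first establish a universal upper bound: for any string $w$ of length $n$ with character multiplicities $k_1,\dots,k_\sigma$, the number of Lyndon substring positions in $w$ is at most $\binom{n+1}{2} - \sum_{i=1}^{\sigma} \binom{k_i}{2}$. The key lemma is that whenever $p<q$ and $w[p]=w[q]$, the substring $w[p..q]$ is not Lyndon: its proper length-one suffix $w[q]$ equals the first character $w[p]$, so lexicographically $w[p..q] > w[q]$, contradicting the Lyndon definition. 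Distinct pairs $(p,q)$ give distinct position-identified substrings, and there are exactly $\sum_i \binom{k_i}{2}$ such pairs.

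Next I would show that the sorted string $w = a_1^{k_1}\cdots a_\sigma^{k_\sigma}$ attains this bound. Every substring of such $w$ is non-decreasing, so it is either unary of length $\geq 2$ (non-Lyndon) or contains at least two distinct characters. In the latter case a direct comparison with each proper non-empty suffix verifies the Lyndon property: either the suffix starts with a strictly larger letter and the first-character comparison settles it, or both strings start with the same letter but the block structure forces the suffix to exhaust this letter earlier than the substring does, at which point the comparison breaks in favor of the substring. Consequently, the non-Lyndon substrings of the sorted $w$ are exactly the $\sum_i \binom{k_i}{2}$ pairs with matching endpoints, saturating the upper bound.

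Finally I would minimize $\sum_i \binom{k_i}{2}$ subject to $\sum_i k_i = n$ and $k_i \geq 0$. The elementary convexity inequality $\binom{a}{2}+\binom{b}{2} > \binom{a-1}{2}+\binom{b+1}{2}$ whenever $a \geq b+2$ (the substring analogue of the $2^a+2^b$ inequality used in the theorem) shows the minimum is attained when the $k_i$ differ by at most one, namely $p$ values equal $m+1$ and $\sigma-p$ values equal $m$, where $n=m\sigma+p$. A short rewrite using $\binom{n+1}{2}=\binom{n}{2}+n$ together with the identity $(\sigma-p)m+p(m+1)=n$ converts $\binom{n+1}{2}-(\sigma-p)\binom{m}{2}-p\binom{m+1}{2}$ into the claimed closed form. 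The $\binom{\sigma}{p}$ optimal strings correspond to the choice of which $p$ characters receive the higher multiplicity; once chosen, only the sorted arrangement achieves the bound, with $a_1^{m}\cdots a_{\sigma-p}^{m}a_{\sigma-p+1}^{m+1}\cdots a_\sigma^{m+1}$ being one such witness. The main subtlety compared to the subsequence case is that the matching-endpoints lemma plays a dual role: it simultaneously lower-bounds non-Lyndon substring counts in every arrangement and exactly characterizes them in the sorted one, which is what lets the same ``balance the block sizes'' optimization close the proof.
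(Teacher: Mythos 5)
Your proposal is correct and follows essentially the same route as the paper: count all $\binom{n+1}{2}$ positioned substrings, subtract the unary repetitions $\sum_{i}\binom{k_i}{2}$, and balance the block sizes via the convexity inequality $\binom{a}{2}+\binom{b}{2}>\binom{a-1}{2}+\binom{b+1}{2}$. You are in fact more careful than the paper on the two points it leaves implicit: that $\sum_i\binom{k_i}{2}$ lower-bounds the number of non-Lyndon positioned substrings in \emph{every} string with those character multiplicities (your matching-endpoints lemma), and that sorted strings attain the bound because every non-decreasing, non-unary string is Lyndon. One caveat concerns your last algebraic step: the expression $\binom{n+1}{2}-(\sigma-p)\binom{m}{2}-p\binom{m+1}{2}$ does \emph{not} rewrite into the corollary's displayed formula. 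Using $(\sigma-p)m+p(m+1)=n$ it equals $\binom{n+1}{2}-(\sigma-p)\binom{m+1}{2}-p\binom{m+2}{2}+n$, which exceeds the stated $\binom{n}{2}-(\sigma-p)\binom{m+1}{2}-p\binom{m+2}{2}+n$ by exactly $n$. Your value is the correct one: it agrees with the introduction's table of Lyndon-factor counts, and for $\sigma=2$, $n=3$ (so $m=p=1$) it gives $5$, matching the five Lyndon substring occurrences of $aab$ (namely $a$, $a$, $b$, $ab$, $aab$), whereas the corollary's formula gives $2$. So the $\binom{n}{2}$ in the statement appears to be a typo for $\binom{n+1}{2}$; you should flag this discrepancy rather than assert an identity with the displayed form, but your argument itself has no gap.
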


\begin{proof}
  Consider a string $w$ of the form 
  \[
    w = {a_1}^{k_1}{a_2}^{k_2}\cdots{a_\sigma}^{k_\sigma}
  \]
  where $\sum_{i=1}^\sigma k_i = n$ and $k_i \geq 0$ for any $i$.
  In a similar way to the above discussion,
  the number of Lyndon substrings of $w$ can be represented as
  \[
    \binom{n+1}{2} - \sum_{i=1}^{\sigma} \left[ \binom{k_i+1}{2}-k_i \right ]
    = \binom{n+1}{2} - \sum_{i=1}^{\sigma} {\binom{k_i+1}{2}} +n.
  \]
  We can use the following inequation that holds for any $a, b$ such that $a \geq b+2$;
  \[
    \binom{a}{2}+\binom{b}{2} > \binom{a-1}{2} + \binom{b+1}{2}.
  \]
  Then,
  \[
    \min \left[ \binom{n+1}{2} - \sum_{i=1}^{\sigma} {\binom{k_i+1}{2}} + n \right] 
    = \binom{n}{2} - (\sigma-p)\binom{m+1}{2} - p \binom{m+2}{2} + n
  \]
  holds.
\end{proof}

Finally, we give exact values $\maxnum(\sigma,n)$ for several conditions
in Table~\ref{tbl:maximum-total}.

\begin{table}[ht]
  \begin{center}
    \caption{Values $\maxnum(\sigma,n)$ for $\sigma = 2,5,10$, $n = 1,2,\cdots,15$.}
    \begin{tabular}{|r|r|r|r|r|} \hline
      $n$ & $\maxnum(2,n)$ & $\maxnum(5,n)$ & $\maxnum(10,n)$ \\ \hline \hline
      1   &  1        & 1     & 1         \\
      2   &  3        & 3     & 3         \\
      3   &  6        & 7     & 7         \\
      4   & 13        & 15    & 15        \\
      5   & 26        & 31    & 31        \\
      6   & 55        & 62    & 63        \\
      7   & 122       & 125   & 127       \\
      8   & 233       & 252   & 255       \\
      9   & 474       & 507   & 511       \\
     10   & 971       & 1018  & 1023      \\
     11   & 1964      & 2039  & 2046      \\
     12   & 3981      & 4084  & 4093      \\
     13   & 8014      & 8177  & 8188      \\
     14   & 16143     & 16366 & 16379     \\
     15   & 32400     & 32747 & 32762     \\ \hline
    \end{tabular}
    \label{tbl:maximum-total}
  \end{center}
\end{table}
\section{Expected total number of Lyndon subsequences}

Let $\totalnum(\sigma,n)$ be the total number of Lyndon subsequences 
in all strings of length $n$ over an alphabet $\Sigma$ of size $\sigma$.
In this section, we determine the expected total number $\exnum(\sigma,n)$ of Lyndon subsequences 
in a string of length $n$ over an alphabet $\Sigma$ of size $\sigma$, 
namely, $\exnum(\sigma,n) = \totalnum(\sigma,n)/\sigma^n$.

\begin{theorem}
  For any $\sigma$ and $n$ such that $\sigma < n$,
  \[
    \totalnum(\sigma,n) = \sum_{m=1}^{n} \left[ \lynnum(\sigma,m) \binom{n}{m} \sigma ^{n-m} \right].
  \]
  Moreover, $\exnum(\sigma,n) = \totalnum(\sigma,n)/\sigma^n$.
\end{theorem}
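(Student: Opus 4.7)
The plan is to prove the identity by double counting: we interpret $\totalnum(\sigma,n)$ as the cardinality of the set
\[
  \mathcal{T} = \{(w, y, I) : w \in \Sigma^n,\ y \text{ is a Lyndon word},\ I \text{ is an occurrence of } y \text{ in } w\},
\]
where an occurrence $I = \{i_1 < \cdots < i_{|y|}\}$ satisfies $y = w[i_1]\cdots w[i_{|y|}]$. By the definition of $\totalnum(\sigma,n)$ as the total count of Lyndon subsequences summed over all strings of length $n$, grouping occurrences by their underlying Lyndon word gives exactly $|\mathcal{T}|$.

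Next I would swap the order of summation, first choosing the Lyndon word and then the strings realizing each of its occurrences. For a fixed length $m$ with $1 \leq m \leq n$ there are $\lynnum(\sigma,m)$ Lyndon words of length $m$ over $\Sigma$. For a fixed such Lyndon word $y$, I would count pairs $(w, I)$ by first selecting the index set $I \subseteq \{1,\ldots,n\}$ with $|I| = m$, which can be done in $\binom{n}{m}$ ways. Given $y$ and $I$, the characters of $w$ at the positions in $I$ are forced to spell $y$, while the remaining $n - m$ positions may hold any of the $\sigma$ alphabet symbols independently, contributing a factor of $\sigma^{n-m}$. Multiplying gives $\lynnum(\sigma,m)\binom{n}{m}\sigma^{n-m}$ triples for length $m$, and summing over $m$ from $1$ to $n$ yields the claimed formula for $\totalnum(\sigma,n)$.

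The expected value $\exnum(\sigma,n)$ then follows immediately by dividing by the number $\sigma^n$ of strings of length $n$, since every such string is equally likely under the uniform distribution.

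There is no real obstacle here: the argument is a straightforward swap-of-summation/double-counting exercise, and the only subtlety worth emphasizing is that an ``occurrence'' is specified by its position set rather than by its value, so different position sets yielding the same Lyndon word are counted separately, which is exactly what $\totalnum$ demands. Existence of $\lynnum(\sigma,m)$ as the number of Lyndon words of length $m$ is quoted from the preliminaries and requires no re-derivation.
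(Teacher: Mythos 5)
Your proposal is correct and follows essentially the same route as the paper: the paper also writes $\totalnum(\sigma,n)$ as a double sum over Lyndon words and strings, fixes a Lyndon word of length $m$, and counts occurrences by choosing the $\binom{n}{m}$ position sets and filling the remaining $n-m$ positions in $\sigma^{n-m}$ ways. No meaningful difference in approach.
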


\begin{proof}
  Let $\occ(w,x)$ be the number of occurrences of subsequence $x$ in $w$,
  and $L(\sigma,n)$ the set of Lyndon words of length less than or equal to $n$ over an alphabet of size $\sigma$.
  By a simple observation, $\totalnum(\sigma,n)$ can be written as
  \[
    \totalnum(\sigma,n) = \sum_{x\in\mathcal{L}(\sigma,n)} \sum_{w\in\Sigma^n} \occ(w,x).
  \]
  Firstly, we consider $\sum_{w\in\Sigma^n} \occ(w,x)$ for a Lyndon word $x$ of length $m$.
  Let $\{i_1, \ldots, i_m\}$ be a set of $m$ positions in a string of length $n$
  where $1 \leq i_1 < \ldots < i_m \leq n$.
  The number of strings that contain $x$ as a subsequence at $\{i_1, \ldots, i_m\}$ is $\sigma^{n-m}$.
  In addition, the number of combinations of $m$ positions is $\binom{n}{m}$.
  Hence, $\sum_{w\in\Sigma^n} \occ(w,x) = \binom{n}{m} \sigma^{n-m}$.
  This implies that
  \[
    \totalnum(\sigma,n) = \sum_{m=1}^{n} \left[ \lynnum(\sigma,m) \binom{n}{m} \sigma ^{n-m} \right].
  \]
  Finally, since the number of strings of length $n$ over an alphabet of size $\sigma$ is $\sigma^n$,
  $\exnum(\sigma,n) = \totalnum(\sigma,n)/\sigma^n$.
  Therefore, this theorem holds.
\end{proof}

Finally, we give exact values $\totalnum(\sigma,n), \exnum(\sigma,n)$ for several conditions
in Table~\ref{tbl:expect-total}.

\begin{table}[ht]
  \begin{center}
    \caption{Values $\totalnum(\sigma,n), \exnum(\sigma,n)$ for $\sigma=2,5, n=1,2,\cdots,10$.}
    \begin{tabular}{|r|r|r|r|r|} \hline
      $n$ & $\totalnum(2,n)$ & $\exnum(2,n)$ & $\totalnum(5,n)$     & $\exnum(5,n)$\\ \hline \hline
      1 & 2          & 1.00      & 5              & 1.00          \\
      2 & 9          & 2.25      &  60            & 2.40          \\
      3 & 32         & 4.00      &  565           & 4.52          \\
      4 & 107        & 6.69      &  4950          & 7.92          \\
      5 & 356        & 11.13     &  42499         & 13.60        \\
      6 & 1205       & 18.83     &  365050        & 23.36       \\
      7 & 4176       & 32.63     & 3163435        & 40.49        \\
      8 & 14798      & 57.80     & 27731650       & 70.99       \\
      9 & 53396      & 104.29    & 245950375      & 125.93     \\
     10 & 195323     & 190.75    & 2204719998     & 225.76     \\ \hline
    \end{tabular}
    \label{tbl:expect-total}
  \end{center}
\end{table}
\section{Expected number of distinct Lyndon subsequences}

Let $\totaldistinct(\sigma,n)$ be the total number of distinct Lyndon subsequences 
in all strings of length $n$ over an alphabet $\Sigma$ of size $\sigma$.
In this section, we determine the expected number $\exdistinct(\sigma,n)$ of distinct Lyndon subsequences 
in a string of length $n$ over an alphabet $\Sigma$ of size $\sigma$, 
namely, $\exdistinct(\sigma,n) = \totaldistinct(\sigma,n)/\sigma^n$.

\begin{theorem} \label{thm:expectdistinct}
  For any $\sigma$ and $n$ such that $\sigma < n$,
  \[
    \totaldistinct(\sigma,n) = \sum_{m=1}^{n} \left[ \lynnum(\sigma,m) \sum_{k=m}^{n} { \binom{n}{k} (\sigma-1)^{n-k} }  \right].
  \]
  Moreover, $\exdistinct(\sigma,n) = \totaldistinct(\sigma,n)/\sigma^n$.
\end{theorem}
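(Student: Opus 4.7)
My plan is to reduce the theorem to counting, for each Lyndon word $x$, the number of strings $w \in \Sigma^n$ that admit $x$ as a subsequence. Exchanging the order of summation gives
\[
  \totaldistinct(\sigma,n) \;=\; \sum_{w \in \Sigma^n}\bigl|\{x \text{ Lyndon} : x \text{ is a subsequence of } w\}\bigr|
  \;=\; \sum_{x} N(x),
\]
where $x$ ranges over Lyndon words over $\Sigma$ of length at most $n$ and $N(x) := |\{w \in \Sigma^n : x \text{ is a subsequence of } w\}|$. Grouping Lyndon words by length reduces the theorem to showing that $N(x) = \sum_{k=m}^n \binom{n}{k}(\sigma-1)^{n-k}$ for every string $x$ of length $m$; notice that the right-hand side only depends on $m$, so $N(x)$ must be insensitive to the particular characters of $x$.

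To compute $N(x)$, I would analyse the greedy leftmost-matching automaton for $x$: it has states $0, 1, \ldots, m$, and in state $j < m$ it advances to $j+1$ upon reading the character $x_{j+1}$ and otherwise stays. The string $x$ is a subsequence of $w$ exactly when this automaton ends in state $m$ after processing $w$. For each $0 \le j < m$, I would count the $w$ whose automaton ends in state exactly $j$ as follows: (i) choose the $j$ positions at which the advances occur, giving $\binom{n}{j}$ possibilities and forcing the characters at those positions to be $x_1, \ldots, x_j$ in order; and (ii) fill each of the remaining $n - j$ stay positions with any of the $\sigma - 1$ characters other than the unique forbidden character dictated by the current state. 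This yields $\binom{n}{j}(\sigma - 1)^{n-j}$ non-accepting strings ending in state $j$, independent of the specific letters of $x$. Combining this with the binomial identity $\sigma^n = ((\sigma - 1) + 1)^n = \sum_{j=0}^n \binom{n}{j}(\sigma-1)^{n-j}$ gives
\[
  N(x) \;=\; \sigma^n - \sum_{j=0}^{m-1}\binom{n}{j}(\sigma-1)^{n-j} \;=\; \sum_{k=m}^n \binom{n}{k}(\sigma-1)^{n-k}.
\]

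Substituting back and grouping Lyndon words of each length via $\lynnum(\sigma,m)$ produces the stated formula for $\totaldistinct(\sigma,n)$, and dividing by $\sigma^n$ gives $\exdistinct(\sigma,n)$. The most delicate step is the automaton count, where one must argue that in every stay slot the number of allowed characters really is $\sigma - 1$ regardless of how the advances interleave; this works because in any state $j'$ the single forbidden character is the well-defined next-to-match letter $x_{j'+1}$, so each stay slot contributes an independent factor of $\sigma-1$. Beyond this, the argument is a standard double count combined with the binomial theorem and the definition of $\lynnum$, so I do not anticipate further obstacles.
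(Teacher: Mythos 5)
Your proof is correct, and it reaches the key quantitative fact --- that the number of strings in $\Sigma^n$ containing a fixed $x$ of length $m$ as a subsequence is $\sum_{k=m}^{n}\binom{n}{k}(\sigma-1)^{n-k}$, independently of the letters of $x$ --- by a genuinely different route than the paper. The paper splits this into two lemmas: first it shows by a double induction (on $|x|$, and within that on $n$, via the recurrence $\subcontain(\ell+1,\Sigma,yc)=\sigma\,\subcontain(\ell,\Sigma,yc)+\subcontain(\ell,\Sigma,y)-\subcontain(\ell,\Sigma,yc)$) that $\subcontain(n,\Sigma,x)$ depends only on $|x|$, and then it evaluates the count for the special string $c^m$, where containment is simply ``$c$ occurs at least $m$ times.'' Your argument instead runs the leftmost-greedy subsequence automaton and counts, for each $j<m$, the strings whose run ends in state exactly $j$ as $\binom{n}{j}(\sigma-1)^{n-j}$; the bijection is sound because the advance positions are determined by $w$ (so the encoding is injective) and every stay slot sits in a state $j'<m$ with exactly one forbidden letter $x_{j'+1}$, and the complement plus the binomial theorem gives the formula. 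Your single computation buys both the character-independence and the exact value at once (and in fact yields the full distribution of the longest greedily matched prefix), at the cost of having to justify the automaton bijection carefully --- which you do. The remaining assembly (summing over Lyndon words grouped by length using $\lynnum(\sigma,m)$ and dividing by $\sigma^n$) coincides with the paper's.
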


To prove this theorem, we introduce the following lemmas.

\begin{lemma} \label{lem:subsec-occ}
  For any $x_1, x_2 \in \Sigma^m$ and $m, n~(m \leq n)$,
  the number of strings in $\Sigma^n$ which contain $x_1$ as a subsequence is equal to 
  the number of strings in $\Sigma^n$ which contain $x_2$ as a subsequence.
\end{lemma}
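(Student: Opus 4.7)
The plan is to show that the count $|\{w \in \Sigma^n : x \text{ is a subsequence of } w\}|$ depends only on $m = |x|$ (and on $\sigma, n$) and not on the particular $x$. To do this I will decompose the set of witnesses $w$ according to the \emph{leftmost embedding} of $x$ in $w$: scanning $w$ from left to right and greedily matching $x[1], x[2], \ldots, x[m]$ in turn yields a unique tuple $1 \leq i_1 < \cdots < i_m \leq n$, which is characterised by the two conditions (a) $w[i_j] = x[j]$ for every $j$, and (b) $w[i] \neq x[j]$ for every $i$ with $i_{j-1} < i < i_j$, where $i_0 := 0$.

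First I will verify that, conversely, every tuple $(i_1,\ldots,i_m)$ satisfying (a) and (b) is indeed the leftmost embedding of $x$ in $w$; this follows by induction on $j$, since (b) forces the greedy scan to skip past every position before $i_j$ without matching $x[j]$, and (a) makes $i_j$ a match. Next I will partition the set $\{w \in \Sigma^n : x \text{ is a subsequence of } w\}$ into blocks indexed by the leftmost embedding, and count each block directly. For a fixed tuple $(i_1,\ldots,i_m)$, the $m$ positions $i_1,\ldots,i_m$ are forced (one prescribed letter each), each of the $i_j - i_{j-1} - 1$ positions in the $j$-th gap may carry any of the $\sigma - 1$ letters of $\Sigma \setminus \{x[j]\}$, and the $n - i_m$ trailing positions are completely free. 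Hence the block has exactly $(\sigma-1)^{i_m - m} \cdot \sigma^{n - i_m}$ elements, where I used $\sum_{j=1}^{m}(i_j - i_{j-1} - 1) = i_m - m$.

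The crucial observation is that this per-block count depends on $x$ only through the identity of the forbidden letter $x[j]$ in each gap, whereas the \emph{cardinality} $\sigma - 1$ of the allowed set does not depend on what that letter is. Summing $(\sigma-1)^{i_m - m} \sigma^{n - i_m}$ over all valid tuples $1 \leq i_1 < \cdots < i_m \leq n$ therefore yields a number that depends only on $\sigma, m, n$. In particular, the totals for $x_1$ and $x_2$ are equal, which is the lemma. I do not expect a genuine obstacle: the only point needing care is the biconditional characterisation (a)--(b) of the leftmost embedding, which is entirely routine, and the minor bookkeeping needed to confirm that (a)--(b) really impose no constraint on the tail $(i_m, n]$.
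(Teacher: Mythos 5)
Your proof is correct, but it takes a genuinely different route from the paper. The paper proves the lemma by a double induction (on $m$, and inside that on $n$), using the recurrence $\subcontain(\ell+1,\Sigma,yc) = \sigma\,\subcontain(\ell,\Sigma,yc) + \subcontain(\ell,\Sigma,y) - \subcontain(\ell,\Sigma,yc)$ obtained by splitting on whether the length-$\ell$ prefix already contains $x=yc$; this establishes the equality $\subcontain(n,\Sigma,x_1)=\subcontain(n,\Sigma,x_2)$ without producing a closed form, and the paper then derives the explicit count separately (Lemma~\ref{lem:subsec-occ-num}) by specializing to the unary string $c^m$. You instead partition the witnesses $w$ by the leftmost (greedy) embedding of $x$, verify the standard characterisation of that embedding, and count each block as $(\sigma-1)^{i_m-m}\sigma^{n-i_m}$ — a quantity that depends on $x$ only through $m$. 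Your argument is non-inductive and more structural, and it buys something extra: summing over tuples immediately yields the explicit formula $\sum_{t=m}^{n}\binom{t-1}{m-1}(\sigma-1)^{t-m}\sigma^{n-t}$, which is an equivalent (re-indexed by the position of the last matched character rather than by the number of occurrences of a fixed letter) form of the paper's $\sum_{k=m}^{n}\binom{n}{k}(\sigma-1)^{n-k}$, so your approach essentially subsumes Lemma~\ref{lem:subsec-occ-num} as well. All the steps you flag as routine (the biconditional characterisation of the greedy embedding, the disjointness of the blocks, and the freedom of the tail beyond $i_m$) do go through without difficulty.
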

\begin{proof}[of Lemma~\ref{lem:subsec-occ}]
  Let $\subcontain(n,\Sigma,x)$ be the number of strings in $\Sigma^n$ 
  which contain a string $x$ as a subsequence.
  We prove $\subcontain(n,\Sigma,x_1) = \subcontain(n,\Sigma,x_2)$ for any $x_1, x_2 \in \Sigma^m$
  by induction on the length $m$.

  Suppose that $m=1$.
  It is clear that the set of strings which contain $x \in \Sigma$ is $\Sigma^n - (\Sigma - \{x\})^n$,
  and $\subcontain(n,\Sigma,x) = \sigma^n - (\sigma - 1)^n$.
  Thus, $\subcontain(n,\Sigma,x_1) = \subcontain(n,\Sigma,x_2)$ for any $x_1, x_2$ 
  if $|x_1| = |x_2| = 1$.

  Suppose that the statement holds for some $k \geq 1$.
  We prove $\subcontain(n,\Sigma,x_1) = \subcontain(n,\Sigma,x_2)$ for any $x_1, x_2 \in \Sigma^{k+1}$ 
  by induction on $n$.
  If $n = k+1$, then $\subcontain(n,\Sigma,x_1) = \subcontain(n,\Sigma,x_2) = 1$.
  Assume that the statement holds for some $\ell \geq k+1$.
  Let $x = yc$ be a string of length $k+1$ such that $y \in \Sigma^k, c \in \Sigma$.
  Each string $w$ of length $\ell+1$ which contains $x$ as a subsequence satisfies either 
  \begin{itemize}
    \item $w[1..\ell]$ contains $x$ as a subsequence, or
    \item $w[1..\ell]$ does not contain $x$ as a subsequence.
  \end{itemize}
  The number of strings $w$ in the first case is $\sigma \cdot \subcontain(j,\Sigma,yc)$.
  On the other hand, the number of strings $w$ in the second case 
  is $\subcontain(\ell,\Sigma,y) - \subcontain(\ell,\Sigma,yc)$.
  Hence, $\subcontain(\ell+1,\Sigma,x) = \sigma \subcontain(\ell,\Sigma,yc) + \subcontain(\ell,\Sigma,y) - \subcontain(\ell,\Sigma,yc)$.
  Let $x_1 = y_1 c_1$ and $x_2 = y_2 c_2$ be strings of length $k+1$.
  By an induction hypothesis, $\subcontain(\ell,\Sigma,y_1 c_1) = \subcontain(\ell,\Sigma,y_2 c_2)$ and 
  $\subcontain(\ell,\Sigma,y_1) = \subcontain(\ell,\Sigma,y_2)$ hold.
  Thus, $\subcontain(\ell+1,\Sigma,x_1) = \subcontain(\ell+1,\Sigma,x_2)$ also holds.
  
  Therefore, this lemma holds.
\end{proof}

\begin{lemma} \label{lem:subsec-occ-num}
  For any string $x$ of length $m \leq n$,
  \[
    \subcontain(n,\Sigma,x) = \sum_{k=m}^{n}{ \binom{n}{k} (\sigma-1)^{n-k} }.
  \]
\end{lemma}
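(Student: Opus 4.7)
The plan is to exploit Lemma~\ref{lem:subsec-occ}, which tells us that $\subcontain(n,\Sigma,x)$ depends only on $m=|x|$, not on the specific string $x$ of that length. So to evaluate it I may freely choose any convenient representative in $\Sigma^m$. The natural choice is a unary string, say $x = a_1^m$, because the subsequence-containment condition collapses to a simple occurrence count.

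Concretely, I would first invoke Lemma~\ref{lem:subsec-occ} to reduce the problem to computing $\subcontain(n,\Sigma,a_1^m)$. Then I would note the elementary equivalence that a string $w \in \Sigma^n$ contains $a_1^m$ as a subsequence if and only if the character $a_1$ appears at least $m$ times in $w$. The count now becomes straightforward: the number of strings in $\Sigma^n$ in which $a_1$ appears exactly $k$ times is $\binom{n}{k}(\sigma-1)^{n-k}$, obtained by choosing the $k$ positions holding $a_1$ and filling the remaining $n-k$ positions with any of the other $\sigma-1$ alphabet symbols. Summing over all valid values $k = m, m+1, \ldots, n$ yields exactly
\[
\subcontain(n,\Sigma,a_1^m) = \sum_{k=m}^{n} \binom{n}{k}(\sigma-1)^{n-k},
\]
which is the claimed formula.

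There is essentially no hard step once Lemma~\ref{lem:subsec-occ} is in hand; the only thing one must be careful about is recognizing that for a unary pattern the notions of ``subsequence occurrence'' and ``character frequency at least $m$'' coincide, which would fail for a non-unary $x$ (where overlapping embeddings would force an inclusion–exclusion argument similar to the one sketched in Lemma~\ref{lem:subsec-occ}). The whole power of the preceding lemma is precisely to spare us from that complication, so the proof reduces to two or three lines.
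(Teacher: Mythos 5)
Your proposal is correct and follows exactly the paper's own argument: apply Lemma~\ref{lem:subsec-occ} to replace $x$ by the unary string $c^m$, then count strings by the exact number $k$ of occurrences of $c$, giving $\binom{n}{k}(\sigma-1)^{n-k}$ for each $k$ and summing over $k = m, \ldots, n$. No differences worth noting.
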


\begin{proof}[of Lemma~\ref{lem:subsec-occ-num}]
  For any character $c$, 
  it is clear that the number of strings that contain $c$ exactly $k$ times is $\binom{n}{k}(\sigma-1)^{n-k}$.
  By Lemma~\ref{lem:subsec-occ}, 
  \[
    \subcontain(n,\Sigma,x) = \subcontain(n,\Sigma,c^m) = \sum_{k=m}^{n}{ \binom{n}{k} (\sigma-1)^{n-k} }.
  \]
  Hence, this lemma holds.
\end{proof}

Then, we can obtain Theorem~\ref{thm:expectdistinct} as follows.

\begin{proof}[of Theorem~\ref{thm:expectdistinct}]
  Thanks to Lemma~\ref{lem:subsec-occ-num},
  the number of strings of length $n$ which contain a Lyndon word of length $m$ 
  is also $\sum_{k=m}^{n}{ \binom{n}{k} (\sigma-1)^{n-k} }$.
  Since the number of Lyndon words of length $m$ over an alphabet of size $\sigma$ is $\lynnum(\sigma,m)$,
  \[
    \totaldistinct(\sigma,n) = \sum_{m=1}^{n} \left[ \lynnum(\sigma,m) \sum_{k=m}^{n} { \binom{n}{k} (\sigma-1)^{n-k} }  \right].
  \]
  Finally, since the number of strings of length $n$ over an alphabet of size $\sigma$ is $\sigma^n$,
  $\exdistinct(\sigma,n) = \totaldistinct(\sigma,n)/\sigma^n$.
  Therefore, Theorem~\ref{thm:expectdistinct} holds.
\end{proof}

We give exact values $\exdistinct(\sigma,n)$ 
for several conditions in Table~\ref{tbl:expect-distinct}.

\begin{table}[ht]
  \caption{Values $\exdistinct(\sigma,n)$ for $\sigma=2,5, n=1,\dots,10,15,20$.}
  \begin{center}
    \begin{tabular}{|r|r|r|} \hline
      $n$ & $\exdistinct(2,n)$ & $\exdistinct(5,n)$\\ \hline \hline
      1 & 1.00   & 1.00     \\
      2 & 1.75   & 2.20     \\
      3 & 2.50   & 3.80     \\
      4 & 3.38   & 6.09     \\
      5 & 4.50   & 9.51     \\
      6 & 6.00   & 14.80    \\
      7 & 8.03   & 23.12    \\
      8 & 10.81  & 36.43    \\
      9 & 14.63  & 57.95    \\
     10 & 19.93  & 93.08    \\ 
     15 & 100.57 & 1121.29  \\
     20 & 559.42 & 15444.90 \\ \hline
    \end{tabular} \label{tbl:expect-distinct}
  \end{center}
\end{table}
\section*{Acknowledgments}
This work was supported by JSPS KAKENHI Grant Numbers 
JP18K18002 (YN), JP21K17705 (YN), 
JP18H04098 (MT), 
JST ACT-X Grant Number JPMJAX200K (YN),
and by JST PRESTO Grant Number JPMJPR1922 (SI).

\bibliographystyle{abbrv}
\bibliography{ref}

\end{document}